\newcommand{\PSL}{{\textrm{PSL}}}
\newcommand{\Inj}{{\textrm{Inj}}}
\newcommand{\IInj}{{\underline{\textrm{Inj}}}}
\newcommand{\N}{{\mathbb{N}}}
\newcommand{\Q}{{\mathbb{Q}}}
\newcommand{\R}{{\mathbb{R}}}
\newcommand{\h}{{\mathbb{H}^2}}
\newcommand{\hh}{{\mathbb{H}}}
\newcommand{\im}{{\textrm{Im}}}
\newcommand{\Id}{{\textrm{Id}}}
\newcommand{\Asymp}{\underset{n\to+\infty}{\asymp}}
\newcommand*{\rom}[1]{\expandafter\@slowromancap\romannumeral #1@}
\theoremstyle {plain} \newtheorem{theorem}{Theorem}[section]}
\theoremstyle {definition} \newtheorem{defi}[theorem]{Définition}}
\theoremstyle {plain} \newtheorem{lemma}[theorem]{Lemma}}
\newtheorem{coro}[theorem]{Corollary}
\newtheorem{prop}[theorem]{Proposition}
\title{On links between horocyclic and geodesic orbits on geometrically infinite surfaces}
\author{Alexandre Bellis}
\date{July 09, 2017}
\begin{document}
\maketitle

\begin{abstract}
We study the topological dynamics of the horocycle flow $h_\R$ on a geometrically infinite hyperbolic surface $S$. Let $u$ be a non-periodic vector for $h_\R$ in $T^1S$. Suppose that the half-geodesic $u(\R^+)$ is almost minimizing and that the injectivity radius along $u(\R^+)$ has a finite inferior limit $\IInj(u(\R^+))$. 
We prove that the closure of $h_\R u$ meets the geodesic orbit along un unbounded sequence of points $g_{t_n}u$. Moreover, if $\IInj(u(\R^+)) = 0$, the whole half-orbit $g_{\R^+}u$ is contained in $\overline{h_\R u}$. 

When $\IInj(u(\R^+)) > 0$, it is known that in general $g_{\R^+}u \not\subset \overline{h_\R u}$. Yet, we give a construction where $\IInj(u(\R^+)) > 0$ and $g_{\R^+}u \subset \overline{h_{\R}u}$, which also constitutes a counter-example to Proposition $3$ of \cite{Ledrappier1997}.
\end{abstract}

\section{Introduction}

Among the curves of constant curvature in $\h$ are the geodesics of curvature zero and the horocycles of curvature one. They give rise to two flows with deep relations in the unitary tangent bundle $T^1\h$ : the geodesic flow and the horocycle flow respectively. Consider now a fuchsian group $\Gamma$ and the quotient surface $S:=\Gamma\backslash\h$. Both of these flows descend to the quotient $T^1S := \Gamma\backslash(T^1\h)$. We denote them by $g_\R$ and $h_\R$ respectively.

While orbits of the geodesic flow are known to be diverse, on the opposite, these of the horocycle flow tend to be rigid. It is illustrated by a result of G.~Hedlund in \cite{Hedlund1936} stating that when the  surface $S$ is compact, for every $u$ in $T^1S$, the orbit $h_\R u$ is dense in $T^1S$. In \cite{Eberlein1977}, P. Eberlein proves that this result comes from a fundamental link between horocyclic and geodesic orbits~: $h_\R u$ is not dense in the non-wandering set $\Omega_h$ of the horocycle flow if and only if the projection on $S$ of $g_{\R^+}u$, denoted by $u(\R^+)$, is almost minimizing (specifically : $\exists C > 0, \forall t > 0, d(u(0),u(t)) > t - C$).

As a consequence, he obtains that if the surface is geometrically finite (i.e. with a finitely generated fundamental group), if $u$ is in $\Omega_h$, then $h_\R u$ is either dense in $\Omega_h$ or periodic. This rigidity property was generalised by M. Ratner to Lie groups and unipotent actions in \cite{Ratner1991}. However, it does not extend to geometrically infinite surfaces (i.e. not geometrically finite). Indeed, $S$ is geometrically finite if and only if every horocyclic orbit in $\Omega_h$ is dense in $\Omega_h$ or periodic (see \cite{DalBo2010}).

In this paper, we are interested in the topological dynamics of the horocycle flow on geometrically infinite surfaces, where little is known. Untwisted hyperbolic flutes are the simplest examples of such surfaces and still contain many interesting behaviours (see \cite{Haas1996} or \cite{Coudene2010}). More precisely, we investigate the links between the geodesic flow and the horocycle flow. We associate to $x$ in $S$ the real number $\Inj(x)$ defined as the maximal radius of a ball centered at $x$ without self-intersection. 

The main result of this paper is :

\begin{theorem}\label{mainthm}
Let $\Gamma$ be a fuchsian group without elliptic elements and such that the quotient surface $S:=\Gamma\backslash\h$ is geometrically infinite. Consider $u$ in the non-wandering set $\Omega_h$ of $h_\R$ in $T^1S$. Suppose that $u(\R^+)$ is almost minimizing and that $h_\R u$ is not periodic and define $\IInj(u(\R^+)) := \liminf\limits_{t\to+\infty}\Inj(u(t))$.

If $\IInj(u(\R^+)) < +\infty$, then there exists a sequence of times $t_n$ converging to $+\infty$ such that $g_{t_n}u \in \overline{h_\R u}$ for all $n$. 

Moreover, if $\IInj(u(\R^+)) = 0$, then $g_{\R^+}u \subset \overline{h_\R u}$.
\end{theorem}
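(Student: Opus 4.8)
The plan is to study the set
$A := \{t \geq 0 : g_t u \in \overline{h_\R u}\}$
and to show that it is a closed additive sub-semigroup of $[0,+\infty)$ containing $0$. Closedness is immediate, since $A = (t \mapsto g_t u)^{-1}(\overline{h_\R u}) \cap [0,+\infty)$ is the intersection of a closed set with a preimage of a closed set under a continuous map. For the semigroup property I would use the commutation relation $g_t h_s = h_{s e^{-t}} g_t$, which gives $g_t h_\R u = h_\R g_t u$ and hence $g_t \overline{h_\R u} = \overline{h_\R g_t u}$ for every $t$; consequently, if $t_0 \in A$ then $g_{t_0} \overline{h_\R u} \subseteq \overline{h_\R u}$, so $t_0 + A \subseteq A$. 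Two elementary facts about such a semigroup then drive the whole argument: if $A \neq \{0\}$ it contains some $t_0 > 0$, hence $\N t_0 \subseteq A$ and $\sup A = +\infty$, which yields the sequence $t_n \to +\infty$ of the first assertion; and if moreover $A$ accumulates at $0$, then, being a closed sub-semigroup, it must equal $[0,+\infty)$, which is exactly $g_{\R^+} u \subseteq \overline{h_\R u}$. It therefore suffices to produce positive elements of $A$, of size controlled by $\IInj(u(\R^+))$.

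To produce them I work in the universal cover. I fix a lift $\tilde u \in T^1\hh^2$, let $\xi^+ \in \partial \hh^2$ be its forward endpoint, and use the characterisation that $v \in \overline{h_\R u}$ if and only if there exist $s_k \in \R$ and $\gamma_k \in \Gamma$ with $\gamma_k h_{s_k} \tilde u \to \tilde v$. Using $\IInj(u(\R^+)) < +\infty$, I choose times $t_n \to +\infty$ with $\Inj(u(t_n))$ converging to $\IInj(u(\R^+))$; each gives an element $\gamma_n \in \Gamma \setminus \{\Id\}$ realising the injectivity radius, i.e. with $d(\tilde u(t_n), \gamma_n \tilde u(t_n))$ bounded (close to $2\IInj(u(\R^+))$). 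Here the almost-minimizing hypothesis is essential: because the ray $u(\R^+)$ cannot backtrack, the only elements that displace the far point $\tilde u(t_n)$ by a bounded amount are those whose axis has an endpoint tending to $\xi^+$. Thus $\gamma_n$ is, up to inverse, a hyperbolic translation of length $\ell_n \leq 2\Inj(u(t_n))$ whose axis approaches $\xi^+$, and near $\tilde u(t_n)$ it behaves like the geodesic flow for time $\ell_n$.

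The closing step, which I expect to be the main obstacle, converts $\gamma_n$ into a horocyclic return. The idea is to push the horocyclic orbit of $\tilde u$ far out along the horocycle based at $\xi^+$ until it reaches the thin part around $\tilde u(t_n)$, and then to apply $\gamma_n$ to fold it back next to a geodesic point $g_{\tau_n} \tilde u$ with $\tau_n \asymp \ell_n$; the non-wandering hypothesis $u \in \Omega_h$, together with non-periodicity of $h_\R u$, provides the recurrence needed to pass from ``arbitrarily close'' to an actual limit, so that $g_{\tau_n} u \in \overline{h_\R u}$ with $\tau_n > 0$. The delicate points are the quantitative estimate relating the geodesic time gained to the translation length $\ell_n$, and the verification that the forward endpoints of the approximating vectors $\gamma_k h_{s_k} \tilde u$ genuinely converge to $\xi^+$ — which is once more where almost-minimization enters.

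Finally I assemble the pieces through the semigroup $A$. The construction yields $\tau_n \in A$ with $0 < \tau_n \lesssim \IInj(u(\R^+))$. Extracting a convergent subsequence $\tau_n \to \tau_\infty$ and invoking closedness of $A$, either $\tau_\infty > 0$, so that $A \supseteq \N \tau_\infty$ is unbounded, or $\tau_n \to 0^+$, so that $A$ accumulates at $0$; in both cases $A \neq \{0\}$ and $\sup A = +\infty$, which proves the first assertion. When $\IInj(u(\R^+)) = 0$ we have $\tau_n \to 0^+$, hence $A$ accumulates at $0$, and the closed sub-semigroup argument forces $A = [0,+\infty)$, that is $g_{\R^+} u \subseteq \overline{h_\R u}$, which is the second assertion.
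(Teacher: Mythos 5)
Your semigroup observation is correct, and it is a genuinely different (and cleaner) way to assemble the final statement than what the paper does: since $g_th_sg_{-t}=h_{se^{-t}}$, the set $A=\{t\geq 0 : g_tu\in\overline{h_\R u}\}$ is indeed a closed additive sub-semigroup of $[0,+\infty)$, so a single positive element of $A$ already yields an unbounded sequence of times, and positive elements of $A$ accumulating at $0$ force $A=[0,+\infty)$. The paper instead reruns its whole construction on a partition of $\R^+$ into intervals of length $\IInj(u(\R^+))+\epsilon$; your reduction would shorten that last step. But this means everything hinges on what you yourself call the closing step: producing even one element of $A$ that is \emph{strictly positive} (and, when $\IInj(u(\R^+))=0$, arbitrarily small and positive), and that step is missing.

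Concretely, applying a single $\gamma_n$ and passing to the limit produces, via the criterion of Proposition 2.3 of the paper, the time $r=\lim_n B_\infty(\gamma_n^{-1}i,i)$, and nothing in your sketch prevents this limit from being $0$ (or negative): a bounded displacement of $\tilde u(t_n)$ bounds the Busemann increment only from \emph{above}, never from below, so your claim $\tau_n\asymp\ell_n$ is unjustified; if $r=0$ you have only proved $u\in\overline{h_\R u}$, and the semigroup machinery has nothing to run on. The paper's key idea, absent from your proposal, is to take \emph{powers} $\alpha_n=\gamma_n^{k_n}$: setting $r_{n,k}=B_\infty(\gamma_n^{-k}i,i)$, one has $r_{n,k}\to+\infty$ as $k\to+\infty$ (because $\gamma_n^{-k}i\to\gamma_n^-\neq\infty$), while each increment $B_\infty(\gamma_n^{-k+l}i,\gamma_n^{-k+l+1}i)$ is at most $d(q_n,\gamma_nq_n)\leq\IInj(u(\R^+))+\epsilon$ (this is the content of Lemma 3.2 and Step 2); a pigeonhole argument then places some $r_{n,k_n}$ in any prescribed interval of length $\IInj(u(\R^+))+\epsilon$, in particular one bounded away from $0$. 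This power trick is also why the paper's Step 1 must prove the stronger statement $\gamma_n^{k_n}\infty\to\infty$ for \emph{every} choice of powers, not just $\gamma_n\infty\to\infty$. Two further inaccuracies: the $\gamma_n$ need not be hyperbolic (only elliptics are excluded, and the paper treats the parabolic case throughout), and the recurrence you attribute to $u\in\Omega_h$ plays no role in any actual argument; what is really needed is that $\tilde u^+$ is fixed by no nontrivial element of $\Gamma$ — almost-minimization rules out a hyperbolic fixing $\tilde u^+$, non-periodicity rules out a parabolic one — which is precisely the hypothesis making Proposition 2.3 applicable.
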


In particular, when $\IInj(u(\R^+)) < +\infty$ for every $u$ in $T^1S$ (O. Sarig calls such a surface \textit{weakly tame}, see \cite{Sarig2010}), then, if $h_\R u$ is not periodic, there always exists a positive time $t$ such that $g_t u \in \overline{h_\R u}$. As a corollary, we get :

\begin{coro}
Let $\Gamma$ be a fuchsian group with no elliptic nor parabolic element such that the quotient surface $S := \Gamma\backslash\h$ is geometrically infinite. If $S$ is weakly tame, then the horocycle flow does not admit any minimal set on $T^1S$.
\end{coro}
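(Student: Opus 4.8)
The plan is to argue by contradiction: assume $h_\R$ admits a minimal set $M \subset T^1S$, apply Theorem~\ref{mainthm} to a point $u \in M$, and show that its conclusion clashes with the almost-minimizing property that minimality forces on $u$. First I would assemble the hypotheses of the theorem. Every point of a minimal set is recurrent, hence non-wandering, so $M \subset \Omega_h$ and in particular $u \in \Omega_h$. Moreover $M$ must be a \emph{proper} subset of $\Omega_h$: otherwise $\Omega_h$ itself would be minimal, every horocyclic orbit in $\Omega_h$ would be dense there, and by the characterization of \cite{DalBo2010} the surface $S$ would be geometrically finite, contrary to hypothesis. Hence $\overline{h_\R u} = M$ is not dense in $\Omega_h$, and Eberlein's criterion \cite{Eberlein1977} shows that $u(\R^+)$ is almost minimizing, say $d(u(0),u(t)) > t - C$ for all $t>0$. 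Since $\Gamma$ has no parabolic element there is no periodic horocyclic orbit, so $h_\R u$ is not periodic; and weak tameness gives $\IInj(u(\R^+)) < +\infty$. All the hypotheses of Theorem~\ref{mainthm} are therefore met.

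The theorem then produces times $t_n \to +\infty$ with $g_{t_n}u \in \overline{h_\R u} = M$ for every $n$. Here I would invoke the compactness of the minimal set to extract a convergent subsequence $g_{t_{n_k}}u \to w$ with $w \in M$. Projecting to $S$ via the continuous map $\pi\colon T^1S \to S$ gives $u(t_{n_k}) = \pi(g_{t_{n_k}}u) \to \pi(w)$, so the distances $d(u(0),u(t_{n_k}))$ remain bounded. This contradicts the almost-minimizing inequality $d(u(0),u(t_{n_k})) > t_{n_k} - C$, whose right-hand side tends to $+\infty$. No minimal set can therefore exist, and the corollary follows.

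The crux is the compactness of $M$: it is the only place beyond formal manipulation, since it is what lets me pass from the \emph{unbounded} family of intersection points $g_{t_n}u$ to a genuine accumulation point $w \in M$. An equivalent route isolating the same point uses the commutation relation $g_t h_s = h_{s e^{t}} g_t$: from $g_{t_n}u \in M$ and minimality one gets $g_{t_n}M = \overline{h_\R g_{t_n}u} = M$, so some $g_t$ with $t>0$ preserves the compact set $M$; the Krylov--Bogolyubov theorem then furnishes a $g_t$-invariant probability measure on $M$, and Poincaré recurrence a point $w \in M$ with $g_{n_k t}w \to w$, hence with $d(w(0),w(n_k t))$ bounded along a subsequence — again contradicting the almost-minimizing property that Eberlein's criterion attaches to every point of $M$. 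In either form, the decisive and only genuine obstacle is this conflict between escape to infinity along an almost-minimizing ray and recurrence inside a compact invariant set.
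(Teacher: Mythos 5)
Your assembly of the hypotheses of Theorem~\ref{mainthm} (ruling out $M=\Omega_h$ via the characterization of geometric finiteness, non-periodicity from the absence of parabolics, $\IInj(u(\R^+))<+\infty$ from weak tameness) is fine, but both of your endgames rest on ``compactness of the minimal set $M$'', and this is a genuine, irreparable gap. A minimal set is only a non-empty closed invariant set with no proper such subset; since $T^1S$ is non-compact, nothing makes $M$ compact. Worse, in your situation compactness is provably absent: you have shown $u(\R^+)$ is almost minimizing, so $d(u(0),u(t_n))>t_n-C\to+\infty$ and the points $g_{t_n}u$ leave every compact subset of $T^1S$; the convergent subsequence you want does not exist, so the ``contradiction'' you reach only contradicts the unjustified compactness claim, not the existence of $M$ (Krylov--Bogolyubov and Poincar\'e recurrence are unavailable for the same reason). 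The same error already sinks your first step: ``every point of a minimal set is recurrent, hence non-wandering'' is true only for compact minimal sets. A closed non-periodic horocycle orbit is homeomorphic to $\R$, is a minimal set, and can consist entirely of wandering points; such orbits occur, for instance, for every vector whose forward endpoint lies outside the limit set of $\Gamma$ (the $\Gamma$-images of the corresponding horocycle accumulate only on horocycles centered at limit points, so the orbit is closed in $T^1S$). These non-compact minimal sets are precisely what makes non-existence theorems of this type (Kulikov \cite{Kulikov2004}, Matsumoto \cite{Matsu2016}, and this corollary) non-trivial. So your argument at best excludes compact minimal sets, and it never establishes $u\in\Omega_h$, which you need both for Eberlein's criterion and for Theorem~\ref{mainthm}; handling wandering vectors requires a separate geometric argument (one can show weak tameness forces the limit set to be all of $\partial\h$, hence $\Omega_h=T^1S$, but you do not give this).

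The deduction the paper intends (it presents the corollary as immediate from Theorem~\ref{mainthm}) is compactness-free, and your second route actually begins with its correct first move before retreating to compactness. Keep the paper's normalization $\tilde u(0)=i$, $\tilde u^+=\infty$. From $g_tu\in M=\overline{h_\R u}$ with $t>0$, and from $h_\R g_tu=g_th_\R u$ as sets, minimality gives $\overline{h_\R g_tu}=M$, i.e. $g_tM=M$; iterating, $g_{-nt}u\in\overline{h_\R u}$ for every $n\geq 1$, and in fact $g_{-nt}u\in\overline{h_\R u}-h_\R u$, since no nontrivial element of $\Gamma$ fixes $\tilde u^+$ (there are no parabolics, and a hyperbolic fixing $\infty$ would contradict almost minimizing). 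Now almost minimizing means exactly that the heights $\im(\gamma i)$, $\gamma\in\Gamma$, are bounded by some $H<+\infty$: one direction follows from $d(\gamma^{-1}i,ie^t)\geq t-\ln\im(\gamma^{-1}i)$, the other by evaluating $d(u(0),u(\cdot))$ at the time when the ray passes closest to an orbit point of large height. By the equivalence (\ref{equi}), $g_ru\in\overline{h_\R u}-h_\R u$ requires elements $\alpha_n$ with $B_\infty(\alpha_n^{-1}i,i)=-\ln\im(\alpha_n^{-1}i)\to r$, which forces $e^{-r}\leq H$, i.e. $r\geq-\ln H$. Thus the set of times at which the orbit closure meets the geodesic orbit is bounded below, while minimality would force it to contain $-nt$ for all $n$: contradiction. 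In short, minimality would make this set of times a group, whereas almost minimizing (via Proposition~\ref{equivcle}) confines it to a half-line; that tension, not recurrence in a compact set, is the content of the corollary, and it is the missing idea in your proposal.
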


This corollary gives an easy way to construct surfaces without a minimal set for the horocycle flow. The first example of such a surface was given by M. Kulikov in \cite{Kulikov2004}. Later, theorems of non-existence were obtained in \cite{Matsu2016} and \cite{LoMass2017}.

In the setting of Theorem~\ref{mainthm}, we can ask : 

\textit{Question : is it possible to have $0 < \IInj(u(\R^+)) < +\infty$ and $g_{\R^+}u \subset \overline{h_\R u}$, as in the case $\IInj(u(\R^+)) = 0$ ?}

Clearly, if $h_\R u$ is not recurrent (i.e. it does not accumulate on itself), then $g_{\R^+}u \not\subset \overline{h_\R u}$. This implies in particular that $h_\R u$ is locally closed (there exists a neighborhood $V$ of $u$ such that $V \cap (\overline{h_\R u} - h_\R u) = \emptyset$) even though it is not closed. In \cite{Starkov1995}, A.N. Starkov gives an example of a surface $S$ satisfying the hypotheses of Theorem~\ref{mainthm} such that $0 < \IInj(u(\R^+)) < +\infty$ and $h_\R u$ is not recurrent. In \cite{Ledrappier1997}, F. Ledrappier generalizes this example to manifolds with \textit{bounded geometry} (see Proposition $3$ of \cite{Ledrappier1997}) : \textit{let $M$ be a manifold with bounded geometry and $u$ in $T^1M$ such that $u(\R^+)$ is asymptotically almost minimizing. Then, for every $t\in\mathbb{R}$, the strong stable leaf 
\[ W^{ss}(g_tu) := \{ v\in T^1M \,|\, d(g_{t+s}u,g_sv)\underset{s\to+\infty}{\longrightarrow} 0 \}\]
is locally closed. 
}

When $M$ is a hyperbolic surface $S$, this statement becomes : \textit{suppose $S$ has an injectivity radius everywhere above some positive constant $C$. Let $u$ be in $\Omega_h$ such that $u(\R^+)$ is almost minimizing.
Then $h_\R u$ is locally closed.
}

Actually, this proposition is false and I construct in section \ref{secbnded} the following counter-example which also answers the previous question :

\begin{theorem}\label{mainexple}
There exists a geometrically infinite surface $S$ with an injectivity radius everywhere above some positive constant $C$ (i.e. $S$ has bounded geometry), with $u$ in $\Omega_h$ satisfying :
\begin{enumerate}
	\item $u(\R^+)$ is almost minimizing.
	\item $g_{\R^+}u \subset \overline{h_\R u}$.
	\item $\IInj(u(\R^+)) < +\infty$.
\end{enumerate}  
In particular, $h_\R u$ is not locally closed. 
\end{theorem}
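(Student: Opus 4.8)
The plan is to construct an explicit hyperbolic flute surface whose geometry oscillates in a controlled way: regions where the injectivity radius grows large alternate with "bottleneck" regions where it returns to a fixed lower bound $C > 0$. I want a sequence of disjoint simple closed geodesics $\gamma_n$ on $S$, separating the surface into successive pieces, whose lengths $\ell_n$ are chosen so that (a) the $\gamma_n$ go to infinity along a single end, (b) the waist (hence the local injectivity radius) near each $\gamma_n$ stays above a common positive constant $C$, while (c) between consecutive waists the funnels widen so that $\Inj(u(t))\to+\infty$ along a subsequence of times. The vector $u$ will be chosen so that $u(\R^+)$ runs out this end crossing each $\gamma_n$ perpendicularly; the almost-minimizing property (i) is then arranged by taking $u(\R^+)$ to be a geodesic that realizes, up to an additive constant, the distance between the successive waists — this is a standard feature of rays escaping through a sequence of collars whose total "lateral spreading" is summable.

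The core of the argument is property (ii), and this is where I expect the main difficulty. The idea is to exploit Theorem~\ref{mainthm}: the liminf of the injectivity radius along $u(\R^+)$ is the fixed constant $C$ coming from the bottleneck waists, so $0 < \IInj(u(\R^+)) < +\infty$, and Theorem~\ref{mainthm} already guarantees an \emph{unbounded} sequence of return times $t_n$ with $g_{t_n}u \in \overline{h_\R u}$. What I must upgrade is from a sequence of geodesic returns to the \emph{whole} half-orbit $g_{\R^+}u$. The mechanism is the interaction between the horocycle flow and the widening funnels: as $u$ passes through a region where the injectivity radius becomes large, a long horocyclic arc based at $g_t u$ can wrap around and come back close to $g_s u$ for a controlled range of $s$, and by tuning the geometry (the lengths $\ell_n$ and the conformal widths of the intermediate funnels) I can force these horocyclic approximations to sweep out \emph{every} point $g_s u$, $s \ge 0$, not merely a discrete set. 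Concretely, I would show that for each target time $s$ and each $\varepsilon > 0$ there is a time $T$ and a horocyclic length $L$ such that $d\bigl(h_L g_T u,\, g_s u\bigr) < \varepsilon$; letting $T\to+\infty$ through the widening regions makes the available wrapping length $L$ unbounded, which is what lets the single continuous parameter $s$ be matched.

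The key technical step is therefore the estimate controlling how a horocyclic orbit re-approaches the geodesic orbit inside a collar of fixed waist but variable surrounding geometry, and this is the step I expect to fight with. In the upper half-plane model I would lift $u(\R^+)$ to a vertical geodesic and track the $\Gamma$-orbit of the relevant basepoint; the horocyclic flow acts by horizontal translation, and the question becomes whether translates $\Gamma$-equivalent to points of $g_{\R^+}u$ accumulate, with the right density, on horocycles based at $u(+\infty)$. The bounded-geometry hypothesis (injectivity radius $\ge C$ everywhere) must be respected throughout, so the widening of the funnels has to be engineered without ever letting a new short geodesic appear — this is the delicate balance, and it is exactly the point at which Ledrappier's Proposition~3 fails, since local closedness would forbid precisely the continuum of horocyclic returns I am constructing.

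Finally, property (iii) is immediate from the construction: since the waists $\gamma_n$ have lengths bounded below, $\Inj(u(t))$ stays bounded along the crossing times, so $\IInj(u(\R^+)) < +\infty$, while the positive lower bound on all injectivity radii gives the bounded-geometry conclusion and shows $\IInj(u(\R^+)) > 0$. Combining (ii) with the observation in the introduction that a non-recurrent orbit cannot contain $g_{\R^+}u$, the inclusion $g_{\R^+}u \subset \overline{h_\R u}$ forces $h_\R u$ to be recurrent yet not locally closed, contradicting the conclusion of Ledrappier's proposition and thereby establishing the counter-example.
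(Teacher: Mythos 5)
Your proposal has a genuine gap at exactly the step you yourself identify as the core of the theorem: property (ii). Everything you offer for it is a hope, not an argument. The ``mechanism'' --- long horocyclic arcs wrapping through widening funnels and sweeping out every $g_su$ by ``tuning the geometry'' --- is never turned into a checkable statement: no group elements are exhibited, no estimate is formulated, and no reason is given why the returns should fill a \emph{dense} set of times rather than the discrete unbounded set that Theorem~\ref{mainthm} already provides (and which, by Starkov's and Ledrappier's examples, is in general all one gets when $\IInj(u(\R^+))>0$). The paper's proof shows what is actually needed: by Proposition~\ref{equivcle}, to place $g_ru$ in $\overline{h_\R u}$ one must produce, for that specific $r$, a sequence $\alpha_n\in\Gamma$ with $\alpha_n\infty\to\infty$ and $B_\infty(\alpha_n^{-1}i,i)\to r$. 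The paper achieves this by an explicit arithmetic device with no analogue in your construction: a family of hyperbolic isometries $f_{q,n}$ indexed by the rationals $q\in(1,+\infty)$, engineered so that $f_{q,n}^{-1}\infty\to\infty$ and $B_\infty(f_{q,n}i,i)\to|\ln q|$, with infinitely many $f_{q,n}$ for \emph{each} $q$ included among the generators (while a ping-pong separation of perpendicular bisectors keeps the group discrete, free, and of injectivity radius $\geq C$). Then $g_{|\ln q|}u\in\overline{h_\R u}$ for every rational $q>1$, and density of $\{|\ln q|\}$ in $\R^+$ together with closedness of $\overline{h_\R u}$ gives (ii). Your flute-with-bottlenecks picture supplies nothing that prescribes the limiting Busemann values on a dense set of targets; that is the missing idea, not a technicality.

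Two further points. First, your ``concrete'' criterion is the wrong statement: $h_Lg_Tu$ lies on the horocycle orbit of $g_Tu$, i.e.\ in $g_Th_\R u$, not in $h_\R u$; making $d(h_Lg_Tu,g_su)<\varepsilon$ small shows (at best) that $g_{s-T}u$ is near $\overline{h_\R u}$ for that particular $T$, and since your $T$ must go to $+\infty$ as $\varepsilon\to 0$, this never pins any fixed $g_su$ inside $\overline{h_\R u}$. The correct criterion is approximation of $g_su$ by points $h_Lu$ of the orbit of $u$ itself, which is precisely what Proposition~\ref{equivcle} encodes. Second, a smaller slip in (iii): waist lengths bounded \emph{below} give the bounded-geometry constant $C$ but do not give $\IInj(u(\R^+))<+\infty$; for that you need the waists crossed by $u(\R^+)$ to have length bounded \emph{above} (in the paper this is arranged because infinitely many generators crossing $[i,\infty)$ have the same translation length). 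Also note that your requirement $\Inj(u(t))\to+\infty$ along a subsequence is not needed for any of (i)--(iii) and only makes the bounded-geometry bookkeeping harder.
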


\textbf{Acknowledgements.} I thank my Ph.D advisor Françoise Dal'Bo for her numerous and precious advices about the redaction of this paper. I also thank Yves Coudène for the fruitful discussions.

\section{Notations and tools}\label{sectools1}

For two points $z$ and $z'$ in $\h$ and two points $\xi$ and $\eta$ in $\partial\h := \R \cup \{\infty\}$, we denote by $[z,z']$ the hyperbolic segment between $z$ and $z'$, by $[z,\xi)$ the half-geodesic going from $z$ to $\xi$ and by $(\eta,\xi)$ the geodesic between $\eta$ and $\xi$. We denote by $g_\R$ and $h_\R$ the geodesic, respectively horocycle flow in the unitary tangent bundle $T^1\h$. For any $u$ in $T^1\h$, the function symbol $u(t)$ refers to the projection on $\h$ of $g_t u$ and $u^+$ refers to the extremity in $\partial\h$ of the half-geodesic $u(\R^+)$.

Consider now two elements $u$ and $v$ in $T^1\h$. Suppose that $u(0) = z$ and $v(0) = z'$ and that $u^+ = v^+ = \xi$. Then there exists $t$ in $\R$ such that $g_t h_\R u = h_\R v$. The Busemann cocycle $B_\xi(z,z')$ centered at $\xi$ between $z$ and $z'$ is by definition the number $B_\xi(z,z') = t$.
Thus, the set $\{ z' \,|\, B_\xi(z,z') = 0 \}$ is the horocycle centered at $\xi$ passing through $z$.

\textbf{Level sets of isometries :}
The group $\PSL_2(\R)$ acts by orientation preserving isometries on $(\h,d)$. Let $\gamma \in \PSL_2(\R)$ be a hyperbolic isometry. We denote by $\gamma^-$ and $\gamma^+$ the points in $\partial\h$ that are its repulsive and attractive extremities respectively. Observe that a point $z\in\h$ is moved by $\gamma$ along the hypercycle passing through $\gamma^-$, z and $\gamma^+$, that we denote by $C_\gamma(z)$. For a positive integer $d$, the point $\gamma^d z$ belongs to the portion of $C_\gamma(z)$ between $z$ and $\gamma^+$.

Let $l_\gamma := \inf\limits_{z\in\h}d(z,\gamma z)$ be the translation length of $\gamma$, realised on its axis $(\gamma^-,\gamma^+)$. We have : 

\begin{prop}[Section $5$ of \cite{PaulPark2015}]\label{prophyper}
For any hyperbolic isometry $\gamma$ and any $z$ in $\h$, 
\[ \sinh \frac{d(z,\gamma z)}{2} = \cosh s \, \sinh \frac{l_\gamma}{2}
\]
where $s = d(z, (\gamma^-,\gamma^+))$.  
\end{prop}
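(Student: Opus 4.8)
The plan is to reduce to a normal form and compute directly, exploiting that all three quantities in the identity are invariant under conjugation by isometries of $\h$. First I would conjugate $\gamma$ by a suitable element of $\PSL_2(\R)$ so that its axis $(\gamma^-,\gamma^+)$ becomes the imaginary axis of the upper half-plane model, i.e. $\gamma^- = 0$ and $\gamma^+ = \infty$. In this normalization $\gamma$ acts as the dilation $w\mapsto\lambda w$ with $\lambda = e^{l_\gamma} > 1$, since the translation length along the axis equals $\ln\lambda$. Because $d(z,\gamma z)$, $l_\gamma$ and $s = d(z,(\gamma^-,\gamma^+))$ are all preserved by the conjugating isometry, it suffices to prove the formula in this model.

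Next I would record the two metric facts I need. For $z = x+iy$ the hyperbolic distance obeys $\cosh d(z,w) = 1 + \frac{|z-w|^2}{2\,\im z\,\im w}$, and the distance $s$ of $z$ to the imaginary axis satisfies $\cosh s = |z|/\im z$; I would justify this last identity in one line by noting that the foot of the perpendicular from $z$ to the axis is the point $i|z|$, obtained from the semicircle centered at $0$ through $z$, and then computing $\cosh d(z,i|z|)$ directly. Applying the distance formula to $z$ and $\gamma z = \lambda z$ then gives
\[
\cosh d(z,\gamma z) = 1 + \frac{(1-\lambda)^2\,|z|^2}{2\lambda\,(\im z)^2},
\]
and rewriting via $\cosh d - 1 = 2\sinh^2(d/2)$ together with $|z|^2/(\im z)^2 = \cosh^2 s$ yields
\[
\sinh^2\frac{d(z,\gamma z)}{2} = \frac{(1-\lambda)^2}{4\lambda}\,\cosh^2 s.
\]

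Finally I would identify $\frac{(1-\lambda)^2}{4\lambda} = \left(\frac{\sqrt\lambda - 1/\sqrt\lambda}{2}\right)^2 = \sinh^2\frac{l_\gamma}{2}$ using $\lambda = e^{l_\gamma}$, and take positive square roots, all quantities being nonnegative, to conclude. The main obstacle is not the algebra but the two geometric reductions: justifying the normal form $w\mapsto\lambda w$ with $\lambda = e^{l_\gamma}$ and the distance-to-axis identity $\cosh s = |z|/\im z$. A coordinate-free alternative avoiding these is purely synthetic: drop the common perpendicular from $z$ to the axis with foot $p$, so that $\gamma p$ is the foot of the perpendicular from $\gamma z$ and $z,p,\gamma p,\gamma z$ forms a Saccheri quadrilateral; the reflection in the perpendicular to the axis at the midpoint $m$ of $[p,\gamma p]$ swaps $z$ and $\gamma z$, so the midpoint $m'$ of $[z,\gamma z]$ lies on that perpendicular with $[z,\gamma z]\perp mm'$. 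The hyperbolic Pythagorean theorem in the right triangle $mpz$ gives $\cosh d(m,z) = \cosh s\,\cosh\frac{l_\gamma}{2}$, and combining it with the right-triangle relations in $mm'z$, which shares the hypotenuse $mz$ and the angle at $m$, recovers the stated formula. I would present the normal-form computation as the main proof and keep this synthetic picture as the conceptual justification.
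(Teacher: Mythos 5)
Your proposal is correct, but note that the paper itself contains no proof of this proposition: it is imported verbatim from Section 5 of the cited reference \cite{PaulPark2015}, so there is no internal argument to compare against. Your normal-form computation is sound and self-contained: the three quantities $d(z,\gamma z)$, $l_\gamma$ and $s$ are indeed conjugation-invariant, so reducing to $\gamma\colon w\mapsto\lambda w$ with $\lambda=e^{l_\gamma}$ on the imaginary axis is legitimate; the identities $\cosh d(z,w)=1+\frac{|z-w|^2}{2\,\im z\,\im w}$ and $\cosh s=|z|/\im z$ (with foot of perpendicular $i|z|$) are both standard and correctly justified; and the algebra
\[
\sinh^2\frac{d(z,\gamma z)}{2}=\frac{(1-\lambda)^2}{4\lambda}\,\frac{|z|^2}{(\im z)^2}
=\sinh^2\frac{l_\gamma}{2}\,\cosh^2 s
\]
checks out, with the positive square root being the right one since all factors are nonnegative. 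Your synthetic alternative via the Saccheri quadrilateral also closes correctly, though as written it is only a sketch: the step ``combining the right-triangle relations'' hides the actual trigonometric bookkeeping (from the right triangle with vertices $m$, $p$, $z$ one gets $\cos\alpha=\tanh(l_\gamma/2)/\tanh d(m,z)$ for the angle $\alpha$ at $m$, and from the right triangle with vertices $m$, $m'$, $z$ one gets $\sinh(d/2)=\sinh d(m,z)\cos\alpha$, whence $\sinh(d/2)=\cosh d(m,z)\tanh(l_\gamma/2)=\cosh s\,\sinh(l_\gamma/2)$), so if you keep it, spell out those two relations. In the context of the paper, either version of your argument would serve to make the exposition self-contained where the author chose simply to cite the literature.
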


When $\gamma$ is parabolic, we denote by $C_\gamma(z)$ the horocycle centered at the unique fixed point $\gamma^+ = \gamma^-$ of $\gamma$ and passing through $z$.

We have :

\begin{prop}[Section $6$ of \cite{PaulPark2015}]\label{proppara}
Consider a parabolic isometry $\gamma$ and pick any $z_0$ in $\h$. Denote by $l_\gamma(z_0)$ the distance $l_\gamma(z_0) = d(z_0,\gamma z_0)$. 

For any $z$ in $\h$, we have : 

\[ \sinh \frac{d(z, \gamma z)}{2} = e^s \sinh \frac{l_\gamma(z_0)}{2}
\] 
where $s = B_{\gamma^+}(z, C_\gamma(z_0))$.
\end{prop}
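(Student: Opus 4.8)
The plan is to reduce to a normal form using the isometry-invariance of every quantity appearing in the statement, and then carry out an explicit computation in the upper half-plane. All four ingredients — the displacement $d(z,\gamma z)$, the length $l_\gamma(z_0)$, the horocycle $C_\gamma(z_0)$, and the Busemann cocycle $B_{\gamma^+}$ — transform naturally under the action of $\PSL_2(\R)$: for $\phi \in \PSL_2(\R)$ one has $d(\phi z, \phi w) = d(z,w)$, $\phi C_\gamma(z_0) = C_{\phi\gamma\phi^{-1}}(\phi z_0)$, $(\phi\gamma\phi^{-1})^+ = \phi(\gamma^+)$, and $B_{\phi\xi}(\phi z, \phi z') = B_\xi(z,z')$. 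Hence, after conjugating $\gamma$ by a suitable $\phi$ sending $\gamma^+$ to $\infty$, I may assume that the parabolic fixed point is $\infty$. A parabolic isometry of $\h$ fixing $\infty$ is exactly a horizontal translation $\gamma : z \mapsto z + a$ for some $a \in \R^*$, and the horocycles centered at $\infty$ are the horizontal lines $\{\im z = \mathrm{const}\}$.

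First I would compute the displacement. Writing $z = x + iy$ with $y = \im z$, the hyperbolic distance formula $\cosh d(z,w) = 1 + \frac{|z-w|^2}{2\,\im z\,\im w}$ applied to $w = \gamma z = (x+a) + iy$ gives $\cosh d(z,\gamma z) = 1 + \frac{a^2}{2y^2}$. Combined with the half-angle identity $\cosh d = 1 + 2\sinh^2\!\frac{d}{2}$, this yields the clean expression
\[ \sinh\frac{d(z,\gamma z)}{2} = \frac{|a|}{2\,\im z}. \]
The same formula at the base point gives $\sinh\frac{l_\gamma(z_0)}{2} = \frac{|a|}{2\,\im z_0}$, since $l_\gamma(z_0) = d(z_0,\gamma z_0)$.

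Next I would identify the exponential factor with the Busemann term. Since $B_{\gamma^+}$ is constant along each horocycle centered at $\gamma^+ = \infty$, the quantity $s = B_\infty(z, C_\gamma(z_0))$ is well defined as $B_\infty(z,w)$ for any $w$ on the horizontal line through $z_0$. From the sign convention fixed in Section~\ref{sectools1} — where a positive time of the geodesic flow toward $\infty$ raises the imaginary part — one gets $B_\infty(z,z') = \log\frac{\im z'}{\im z}$; taking $z'$ with $\im z' = \im z_0$ gives $e^s = \frac{\im z_0}{\im z}$.

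Finally I would take the ratio, $\dfrac{\sinh(d(z,\gamma z)/2)}{\sinh(l_\gamma(z_0)/2)} = \dfrac{|a|/(2\im z)}{|a|/(2\im z_0)} = \dfrac{\im z_0}{\im z} = e^s$, which is exactly the claimed identity. I expect the only delicate point to be the bookkeeping of the sign and normalization of the Busemann cocycle — confirming that the convention of Section~\ref{sectools1} produces $e^{+s}$ rather than $e^{-s}$ — together with the verification that conjugation genuinely preserves all the data, so that the reduction to $\gamma^+ = \infty$ loses no generality.
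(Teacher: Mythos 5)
Your proof is correct. The paper itself contains no proof of Proposition~\ref{proppara} --- it is quoted from Section $6$ of \cite{PaulPark2015} --- so there is no internal argument to compare against; your reduction-to-normal-form computation is the standard derivation of this identity and every step checks out. In particular the one delicate point you flagged, the sign of the Busemann cocycle, is settled correctly by the paper's own flow-based definition in Section~\ref{sectools1}: from $g_t h_\R u = h_\R v$ with $B_\xi(z,z') = t$, a vector based at height $\im z$ pointing toward $\infty$ is carried to height $e^t\,\im z$, so $B_\infty(z,z') = \ln\frac{\im z'}{\im z}$, exactly your convention, and hence $e^s = \frac{\im z_0}{\im z}$ with $s = B_\infty(z, C_\gamma(z_0))$ well defined since $B_\infty(z,\cdot)$ depends only on the imaginary part. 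Two external cross-checks confirm the sign: the paper later computes $B_\infty\!\left(\frac{i}{q}, i\right) = |\ln q|$ for $q > 1$ in Section~\ref{secbnded}, which your formula reproduces as $\ln q$; and in Step $1$ of the proof of Theorem~\ref{mainthm} the proposition is used to deduce $d(z,\gamma_n z) < d(q_n,\gamma_n q_n)$ from $B_{\gamma_n^+}(z,C_{\gamma_n}(z_0)) < B_{\gamma_n^+}(q_n,C_{\gamma_n}(z_0))$, a monotonicity your $e^s$ factor delivers (displacement grows as one moves below the reference horocycle), in pleasing analogy with the $\cosh s$ factor of Proposition~\ref{prophyper}. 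The conjugation-invariance bookkeeping ($d$, $B$, fixed points, and $\phi C_\gamma(z_0) = C_{\phi\gamma\phi^{-1}}(\phi z_0)$ all equivariant under $\PSL_2(\R)$) is likewise complete, so the reduction to $\gamma : z \mapsto z + a$, $a \in \R^*$, loses no generality.
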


To prove our theorems, we will translate the dynamics of the horocycle flow on $\Gamma\backslash(T^1\h)$ in terms of the action of $\Gamma$ on $\h$ and $\partial\h$ using the following proposition.

\begin{prop}[Proposition $2.1$, Section \rom{5} of \cite{DalBo2010}]\label{equivcle}
	Take a vector $u$ in $T^1S$ and a positive real number $r$. Note $\tilde{u}$ a lift of $u$ in $T^1\h$ and suppose that $\tilde{u}^+$ is not fixed by any element of $\Gamma$ except the identity. Then 
	\[ \left( g_ru \in \overline{h_\R u}-h_\R u \right) \]
	\[ \Updownarrow \]
	\[ \left( \exists (\alpha_n)_\N \in \Gamma^\mathbb{N} \textrm{ s.t. } \alpha_n\tilde{u}^+ \underset{n\to+\infty}{\longrightarrow} \tilde{u}^+ \textrm{ and } B_{\tilde{u}^+}(\alpha_n^{-1}i,\tilde{u}(0)) \underset{n\to+\infty}{\longrightarrow} B_{\tilde{u}^+}(i,\tilde{u}(r))\right). \]
\end{prop}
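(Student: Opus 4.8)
The plan is to lift the convergence in $T^1S$ to $T^1\h$ and reinterpret everything through the $\Gamma$-action on horocycles. Write $\xi := \tilde u^+$ and let $H_0$, $H_r$ be the horocycles centered at $\xi$ passing through $\tilde u(0)$ and $\tilde u(r)$ respectively; recall that $h_\R\tilde u$ is exactly the set of vectors based on $H_0$ and pointing towards $\xi$, while $g_r\tilde u$ is the vector based at $\tilde u(r)\in H_r$ pointing towards $\xi$. Since $\Gamma$ has no elliptic elements it acts freely, $\pi:T^1\h\to T^1S$ is a covering, and $g_ru$ lies in $\overline{h_\R u}$ if and only if there are $s_n\in\R$ and $\alpha_n\in\Gamma$ with $\alpha_nh_{s_n}\tilde u\to g_r\tilde u$ in $T^1\h$ (choose, for large $n$, the unique lift of $h_{s_n}u$ sitting near $g_r\tilde u$ in an evenly covered neighborhood). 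First I would record that under the hypothesis that no nontrivial element of $\Gamma$ fixes $\xi$, one has $g_ru\notin h_\R u$ for $r>0$: an equality would force some $\alpha$ to fix $\xi$, hence $\alpha=\Id$ and $g_r\tilde u=h_s\tilde u$, which is impossible since the geodesic and horocycle flows carry $\tilde u$ onto two distinct horocycles centered at $\xi$. Thus membership in $\overline{h_\R u}-h_\R u$ and in $\overline{h_\R u}$ coincide, and it suffices to characterize the latter.

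Next, the crux is to show that $\alpha_nh_{s_n}\tilde u\to g_r\tilde u$ for suitable $s_n$ is equivalent to the convergence of the translated horocycles $\alpha_nH_0$ towards $H_r$, and then to turn this geometric statement into the stated analytic one. A unit vector is determined by its base point together with its forward endpoint, and the forward endpoint of $\alpha_nh_{s_n}\tilde u$ is $\alpha_n\xi$ (the horocycle flow fixes the endpoint). Hence convergence to $g_r\tilde u$ forces $\alpha_n\xi\to\xi$ and forces the base points, which lie on $\alpha_nH_0$, to converge to $\tilde u(r)\in H_r$; conversely, if $\alpha_n\xi\to\xi$ and $\alpha_nH_0\to H_r$, I would pick $p_n\in\alpha_nH_0$ with $p_n\to\tilde u(r)$ and let $v_n$ be the vector based at $p_n$ pointing to $\alpha_n\xi$, which lies in $\alpha_nh_\R\tilde u$ and hence equals $\alpha_nh_{s_n}\tilde u$ for some $s_n$. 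Its base point converges to $\tilde u(r)$ and its forward endpoint to $\xi$, so $v_n\to g_r\tilde u$ (the backward endpoint converges automatically, being determined by the base point and forward endpoint). I would then encode $\alpha_nH_0\to H_r$ through Busemann heights relative to $i$: together with $\alpha_n\xi\to\xi$ it amounts to $B_{\alpha_n\xi}(i,\alpha_n\tilde u(0))\to B_\xi(i,\tilde u(r))$, using that $B_\xi(i,\cdot)$ is constant along horocycles centered at $\xi$ and jointly continuous in the endpoint and the point.

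Finally I would invoke the equivariance of the Busemann cocycle, $B_{\alpha\eta}(\alpha z,\alpha z')=B_\eta(z,z')$, to rewrite $B_{\alpha_n\xi}(i,\alpha_n\tilde u(0))=B_{\alpha_n\xi}(\alpha_n\alpha_n^{-1}i,\alpha_n\tilde u(0))=B_\xi(\alpha_n^{-1}i,\tilde u(0))$, which converts the condition into $\alpha_n\tilde u^+\to\tilde u^+$ and $B_{\tilde u^+}(\alpha_n^{-1}i,\tilde u(0))\to B_{\tilde u^+}(i,\tilde u(r))$, as claimed; for the forward implication the same computation applied to the point $\alpha_nh_{s_n}\tilde u(0)\in\alpha_nH_0$, whose Busemann height equals that of $\alpha_n\tilde u(0)$, yields both conditions simultaneously. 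The main obstacle is this middle step: making rigorous the equivalence between convergence of vectors in $T^1\h$ and convergence of the translated horocycles, that is, correctly disentangling the data carried by a horocyclic orbit (forward endpoint plus Busemann level) from the free horocyclic parameter $s_n$, and checking the required continuity and equivariance of $B$. The lifting argument and the removal of the $h_\R u$ part are routine once the no-fixed-point hypothesis is in hand.
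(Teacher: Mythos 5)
The paper does not prove this proposition at all: it is imported as a black box from Dal'Bo's book (Proposition 2.1, Section \rom{5} of \cite{DalBo2010}), so there is no internal proof to compare yours against. Judged on its own, your argument is correct, and it is essentially the standard proof of this kind of equivalence: lift convergence through the covering $T^1\h \to T^1S$, identify a unit vector with the pair (base point, forward endpoint), encode the translated horocycle $\alpha_n H_0$ by its center $\alpha_n\tilde u^+$ and its Busemann level relative to $i$, and use equivariance $B_{\alpha\eta}(\alpha z,\alpha z')=B_\eta(z,z')$ to convert that level into $B_{\tilde u^+}(\alpha_n^{-1}i,\tilde u(0))$. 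You also use the hypothesis that only $\Id$ fixes $\tilde u^+$ exactly where it is needed, namely to show $g_ru\notin h_\R u$ for $r>0$, which reduces membership in $\overline{h_\R u}-h_\R u$ to membership in $\overline{h_\R u}$.

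The step you yourself flag as the main obstacle, producing points $p_n\in\alpha_nH_0$ with $p_n\to\tilde u(r)$ from the Busemann convergence, is the only place where an explicit argument is still owed, and it is routine: let $p_n$ be the intersection of the geodesic ray $[\tilde u(r),\alpha_n\tilde u^+)$ with $\alpha_nH_0$. Since $B_{\alpha_n\tilde u^+}(i,\cdot)$ is constant on $\alpha_n H_0$, the cocycle relation gives $B_{\alpha_n\tilde u^+}(\tilde u(r),p_n)=-B_{\alpha_n\tilde u^+}(i,\tilde u(r))+B_{\tilde u^+}(\alpha_n^{-1}i,\tilde u(0))$, and joint continuity of $B$ together with your hypotheses makes this tend to $-B_{\tilde u^+}(i,\tilde u(r))+B_{\tilde u^+}(i,\tilde u(r))=0$; as $\tilde u(r)$ and $p_n$ lie on a common geodesic toward $\alpha_n\tilde u^+$, this quantity is $\pm d(\tilde u(r),p_n)$, so $p_n\to\tilde u(r)$. (Alternatively, you can avoid choosing $p_n$ by working in Hopf coordinates $v\mapsto(v^-,v^+,B_{v^+}(i,v(0)))$: the vectors with coordinates $(\tilde u^-,\alpha_n\tilde u^+,B_{\tilde u^+}(\alpha_n^{-1}i,\tilde u(0)))$ lie in $\alpha_nh_\R\tilde u$ and converge to $g_r\tilde u$ by continuity of these coordinates and their inverse.) One small remark: freeness and proper discontinuity of the $\Gamma$-action on $T^1\h$, hence the covering property, hold for any Fuchsian group, elliptic elements included, since a nontrivial elliptic isometry fixes no unit tangent vector; your appeal to the absence of elliptic elements there is harmless but not needed.
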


\section{Proof of theorem~\ref{mainthm}}\label{secproof}

Let us first give a precise definition of the injectivity radius.

\begin{defi}[Injectivity radius] 
Let $S := \Gamma\backslash\h$ be a hyperbolic surface. The injectivity radius of $S$ at $p$ is : 
 \[ \Inj(p) := \min\limits_{\substack{\gamma\in\Gamma \\ \gamma\neq id}}d(\tilde{p},\gamma\tilde{p}) \]
 where $\tilde{p}$ is any lift of $p$ to $\h$.
\end{defi}

\begin{proof}[Proof of Theorem~\ref{mainthm}]
Consider a lift $\tilde{u}$ in $T^1\h$ of $u$. Up to conjugacy, we can suppose $\tilde{u}^+=\infty$ and $\tilde{u}(0) = i$.

Note that with our choice of lift $\tilde{u}$ of $u$, the equivalence of Proposition~\ref{equivcle} becomes : 

\begin{equation}\label{equi}
(g_r u \in \overline{h_\R u} - h_\R u) \Longleftrightarrow \left( \exists (\alpha_n)_\N \in \Gamma^\N,\, \alpha_n \infty \to \infty \textrm{ and } B_\infty(\alpha_n^{-1}i,i) \to r \right) 
\end{equation}

The key of this proof is to find elements $\alpha_n$ in $\Gamma$ on  which to apply this equivalence.

\textbf{Step 1 :}
\textit{There is a sequence of points $(q_n)_n$ going to $\infty$ on the half-geodesic $[i,\infty) = [\tilde{u}(0),\tilde{u}^+)$ and a sequence of elements $\gamma_n$ in $\Gamma$ that are all different such that}
\begin{enumerate}
\item \label{step11} $d(q_n,\gamma_nq_n) \underset{n\to+\infty}{\longrightarrow} \IInj(u(\R^+)).$
\item \label{step12} \textit{For every sequence of positive integers $(k_n)_\N$ we have $\gamma_n^{k_n}\infty \underset{n\to+\infty}{\longrightarrow} \infty.$}
\end{enumerate}

\begin{proof}[Proof of Step $1$]
The hypothesis $\IInj(u(\R^+)) < +\infty$ of Theorem~\ref{mainthm} gives us a sequence of points $q_n$ going to $\infty$ in $\partial\h$ along the half-geodesic $[i,\infty)$ and a sequence of elements $(\gamma_n)_{n\geq 0}$ in $\Gamma - \Id$ satisfying condition \ref{step11} of Step $1$. Since neither $g_\R u$ nor $h_\R u$ is periodic, these elements $\gamma_n$ are all different.

Consider a subsequence of $(\gamma_n)_\N$, that we still denote by $(\gamma_n)_\N$, such that $\lim\limits_{n\to+\infty} \gamma_n^- = \eta$ and $\lim\limits_{n\to+\infty} \gamma_n^+ = \xi$.

Suppose first that $\eta \neq \xi$. In this case, for any $z$ in $\h$, the distance $s_n = d(z,(\gamma_n^-,\gamma_n^+))$ is bounded from above. Thus, since $l_{\gamma_n} \leq d(q_n, \gamma_n q_n)$ for every $n$, Proposition~\ref{prophyper} implies that for any $z$ in $\h$, the elements $\gamma_n z$ stay in a compact. This contradicts the discreteness of $\Gamma$.

Suppose now that $\eta = \xi \neq \infty$. If the elements $\gamma_n$ are hyperbolic, consider the half-geodesic $[p,\xi)$ starting from a point $p$ on $(0,\infty)$ and orthogonal to $(0,\infty)$ (if $\eta = \xi = 0$, consider any point $p$ on $(0,\infty)$). For $n$ big enough, we have $d(p,(\gamma_n^-,\gamma_n^+)) < d(q_n,(\gamma_n^-,\gamma_n^+))$. Thus, Proposition~\ref{prophyper} implies that $d(p,\gamma_n p) < d(q_n,\gamma_n q_n)$. Since the latter is bounded from above, we get again a contradiction with the discreteness of $\Gamma$. Finally, if the elements $\gamma_n$ are parabolic, for any $z$ and any $z_0$ in $\h$, we eventually have $B_{\gamma_n^+}(z,C_{\gamma_n}(z_0)) < B_{\gamma_n^+}(q_n,C_{\gamma_n}(z_0))$. Thus, Proposition~\ref{proppara} implies that $d(z,\gamma_n z) < d(q_n, \gamma_n q_n)$, which gives again a contradiction with the discreteness of $\Gamma$. 

In conclusion,
\begin{equation}\label{egalitelim} \eta = \xi = \infty. 
\end{equation} 

Choose now the following orientation for the elements $\gamma_n$.
\begin{itemize}
\item If $\gamma_n$ is hyperbolic, choose $|\gamma_n^-| \leq |\gamma_n^+|.$ 
\item If $\gamma_n$ is parabolic, choose it such that $|\gamma_n \infty| > |\gamma_n^+|.$
\end{itemize}

This choice of orientation combined with (\ref{egalitelim}) concludes Step $1$. 
\end{proof}

\textbf{Step 2 :}
\textit{Fix $\epsilon > 0$ and an interval $I$ of $\R^+$ of length $\IInj(u(\R^+)) + \epsilon$. If $n$ is big enough, there exists an integer $k_n$ such that $B_\infty(\gamma_n^{-k_n}i,i)$ belongs to $I$.}

\begin{proof}[Proof of Step $2$]
For a positive integer $k$ we put : 

\[ r_{n,k} := B_\infty(\gamma_n^{-k}i,i).
\]

We have $r_{n,k} = \sum\limits_{l=0}^{k-1} B_\infty(\gamma_n^{-k+l}i,\gamma_n^{-k+l+1}i)$.

Let us prove that for $n$ big enough, each step $B_\infty(\gamma_n^{-k+l}i,\gamma_n^{-k+l+1}i)$ is smaller than $\Inj(u(\R^+)) + \epsilon$.

For now, we admit the following lemma : 

\begin{lemma}\label{existp}
For all positive integers $n$ and nonpositive integer $a$, there exists a point $p_{n,a}$ in $\h$ satisfying : 
\begin{enumerate}
 \item \label{lem1} $d(p_{n,a}, \gamma_n p_{n,a}) = d(q_n, \gamma_n q_n)$
 \item \label{lem2} $B_\infty(p_{n,a}, \gamma_n^{a} i) = -d(\gamma_n^{a} i, p_{n,a})$
\end{enumerate}
\end{lemma}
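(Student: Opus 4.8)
The plan is to read the two conditions geometrically and then solve the second by an intermediate value argument. With our normalization $\tilde u^+=\infty$, $\tilde u(0)=i$, the Busemann cocycle is $B_\infty(z,z')=\log\frac{\im z'}{\im z}$, so condition~\ref{lem2} reads $\log\frac{\im \gamma_n^a i}{\im p_{n,a}}=-d(\gamma_n^a i,p_{n,a})$. Since $B_\infty(z,z')\ge -d(z,z')$ with equality exactly when $z$ lies on the vertical geodesic \emph{above} $z'$, this says precisely that $p_{n,a}$ sits on the vertical ray issued upward from $\gamma_n^a i$ toward $\infty$ (same real part, height at least $\im \gamma_n^a i$). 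Thus the lemma amounts to finding, on this upward ray, a point whose displacement under $\gamma_n$ equals $D_n:=d(q_n,\gamma_n q_n)$, the quantity tending to $\IInj(u(\R^+))$.

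First I would set $f(z):=d(z,\gamma_n z)$ and study it along that ray. At the base, applying the isometry $\gamma_n^{-a}$ gives $f(\gamma_n^a i)=d(\gamma_n^a i,\gamma_n^{a+1}i)=d(i,\gamma_n i)$, a value independent of $a$. At the top, $f(z)\to+\infty$ as $z\to\infty$ along the ray: if $\gamma_n$ is hyperbolic this follows from Proposition~\ref{prophyper}, since the distance $s=d(z,(\gamma_n^-,\gamma_n^+))$ to the bounded axis tends to $+\infty$; if $\gamma_n$ is parabolic it follows from Proposition~\ref{proppara}, since the Busemann coordinate $s=B_{\gamma_n^+}(z,C_{\gamma_n}(z_0))$ of a point climbing away from the finite fixed point $\gamma_n^+$ tends to $+\infty$. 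As $f$ is continuous with limit $+\infty$, the intermediate value theorem yields a point with $f=D_n$ as soon as $D_n$ is not below the infimum of $f$ on the ray.

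To control that infimum I would pass from displacements to \emph{levels}. Propositions~\ref{prophyper} and \ref{proppara} show that $f$ is a strictly increasing function of the single parameter $s$ (distance to the axis in the hyperbolic case, Busemann coordinate relative to $\gamma_n^+$ in the parabolic case). Hence $f=D_n$ on the ray is equivalent to attaining the level $s(q_n)$, and because the axis $(\gamma_n^-,\gamma_n^+)$ (resp.\ the horocycle $C_{\gamma_n}(z_0)$) is $\gamma_n$-invariant, the base level is $s(\gamma_n^a i)=s(i)$. Since $s\to+\infty$ up the ray, it suffices to show $s(i)\le s(q_n)$, i.e.\ that $i$ sits at a level no larger than $q_n$. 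This comparison is what I expect to be the main obstacle, and I would establish it from the information gathered in Step~1: $q_n\to\infty$ along $[i,\infty)$ while $\gamma_n^\pm\to\infty$, together with the orientation fixed at the end of Step~1, which pins down the side on which $\gamma_n$ displaces and keeps $q_n$ at least as far from the axis (resp.\ as high in Busemann level) as $i$. Once the target level is bracketed between $s(i)$ and $+\infty$, the intermediate value theorem produces $p_{n,a}$, and properties~\ref{lem1} and \ref{lem2} hold by construction.
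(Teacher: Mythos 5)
Your reading of the two conditions matches the paper's starting point: the second condition places $p_{n,a}$ on the vertical ray $[\gamma_n^a i,\infty)$, and, via Propositions~\ref{prophyper} and~\ref{proppara}, the first condition amounts to this ray meeting the hypercycle (resp.\ horocycle) $C_{\gamma_n}(q_n)$, i.e.\ attaining the level $s(q_n)$ in your notation. Your base-value computation $f(\gamma_n^a i)=d(i,\gamma_n i)$ and the limit $f\to+\infty$ up the ray are also correct. The gap is the inequality you reduce everything to: $s(i)\le s(q_n)$ is false for every sufficiently large $n$, and those are exactly the $n$ that matter in Step~2. Indeed, the $\gamma_n$ are pairwise distinct elements of the discrete group $\Gamma$, so $d(i,\gamma_n i)\to+\infty$, while $d(q_n,\gamma_n q_n)\to\IInj(u(\R^+))<+\infty$; by the strict monotonicity in $s$ of the displacement formulas of Propositions~\ref{prophyper} and~\ref{proppara}, this forces $s(i)>s(q_n)$ for all large $n$. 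The paper proves precisely this reverse inequality inside its own proof of the lemma (``the converse would contradict the discreteness of $\Gamma$''). Consequently an intermediate value argument that brackets the target level between the base level $s(i)$ and $+\infty$ can never close: the target level lies strictly below the level at which you start, and your guess that the orientation keeps $q_n$ at least as far from the axis as $i$ is exactly backwards.

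What makes the lemma true is that the level $s$ is not monotone along the vertical ray: starting from $\gamma_n^a i$, the ray first dips toward the axis (resp.\ toward the parabolic fixed point), down to level $s(q_n)$ and below, before climbing to $+\infty$; the desired point exists because of this dip, not because of a bracketing from the base value. Whether the dip actually reaches level $s(q_n)$ depends on where the base point sits on $C_{\gamma_n}(i)$, and this is where the hypothesis $a\le 0$ is essential: combined with the orientation chosen at the end of Step~1, it places $\gamma_n^a i$ on the portion of $C_{\gamma_n}(i)$ between $i$ and $\gamma_n^-$, and the paper verifies (splitting into the cases where $\gamma_n^-,\gamma_n^+$ have opposite signs, equal signs, or $\gamma_n$ is parabolic) that vertical rays issued from that portion always cross $C_{\gamma_n}(q_n)$, whereas rays issued from points of $C_{\gamma_n}(i)$ far toward $\gamma_n^+$ can miss $C_{\gamma_n}(q_n)$ entirely. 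Your argument uses $a\le 0$ only through the identity $s(\gamma_n^a i)=s(i)$, which holds for every integer $a$, positive or not; an argument that does not genuinely distinguish $a\le 0$ from $a>0$ cannot prove this lemma.
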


For convenience, set $s_{n,a} := d(\gamma_n^a i, p_{n,a})$ and $A_{k,l} := B_\infty(\gamma_n^{-k+l}i,\gamma_n^{-k+l+1}i)$. Using Lemma~\ref{existp}, we compute : 

\begin{equation*}
\begin{split}
A_{k,l} =~ &B_\infty(\gamma_n^{-k+l}i,\gamma_n^{-1}p_{n,-k+l+1}) +  B_\infty(\gamma_n^{-1}p_{n,-k+l+1},p_{n,-k+l+1}) \\ 
&+ B_\infty(p_{n,-k+l+1},\gamma_n^{-k+l+1}i) \\ 
\leq~ &d(\gamma_n^{-k+l}i,\gamma_n^{-1}p_{n,-k+l+1}) + d(\gamma_n^{-1}p_{n,-k+l+1},p_{n,-k+l+1}) - s_{n,-k+l+1} \\
=~ &s_{n,-k+l+1} + d(q_n,\gamma_n q_n) - s_{n,-k+l+1} \\
=~ &d(q_n,\gamma_n q_n)
\end{split}
\end{equation*}

As $d(q_n,\gamma_n q_n)$ is eventually smaller than $\IInj(u(\R^+)) + \epsilon$, we obtain that for $n$ big enough :  

\begin{equation}
A_{k,l} = B_\infty(\gamma_n^{-k+l}i,\gamma_n^{-k+l+1}i) \leq \IInj(u(\R^+)) + \epsilon. 
\end{equation}

Thus, $r_{n,k}$ is a sum of $k$ terms $A_{k,l}$, for $l=0 \dots k-1$, all smaller than $\IInj(u(\R^+)) + \epsilon$ which is the length of the interval $I$ of $\mathbb{R}^+$. We now use the fact that since $\gamma_n^{-k}i \underset{k\to+\infty}{\longrightarrow} \gamma_n^- \neq \infty$ for every $n$, we have $r_{n,k} \underset{k\to+\infty}{\longrightarrow} +\infty$ for every $n$. Thus, there exists an integer $k_n$ such that $r_{n,k_n}$ belongs to $I$.

This concludes Step $2$.
\end{proof}

\textit{End of the proof of Theorem~\ref{mainthm}.} Take the elements $\gamma_n$ given by Step~$1$. Fix an $\epsilon > 0$, chosen arbitrarily small, and an interval $I$ of $\R^+$ of length $\IInj(u(\R^+)) + \epsilon$. Consider the sequence of positive integers $(k_n)_\N$ given by Step~$2$. Since the numbers $B_\infty(\gamma_n^{-k_n}i,i)$ eventually all belong to $I$, the sequence of numbers $(B_\infty(\gamma_n^{-k_n}i,i))_\N$ admits an accumulation point $r$ in $I$. Thus, setting $\alpha_n := \gamma_n^{k_n}$ and applying the equivalence (\ref{equi}), we get that $g_r u \in \overline{h_\R u} - h_\R u$.

Now, applying the same argument to a partition $(I_l)_\N$ of $\R^+$ in intervals of lengths $\IInj(u(\R^+)) + \epsilon$, we get a sequence of times $(t_l)_\N$, where each $t_l$ is in $I_l$, and such that $g_{t_l}u$ belongs to $\overline{h_\R u}$ for every $l$. 

Moreover, if $\IInj(u(\R^+)) = 0$, as the intervals $I_l$ will be of length $0 + \epsilon$ for an $\epsilon > 0$ arbitrarily small, we get $g_{\R^+}u \subset \overline{h_\R u}$.

\end{proof}

\begin{proof}[Proof of Lemma~\ref{existp}.]
Take an isometry $\gamma_n$ as in Step $1$.

Observe that a point in $C_{\gamma_n}(q_n) \cap [ \gamma_n^ai, \infty)$ would satisfy condition \ref{lem1} and also condition \ref{lem2} according to Proposition~\ref{prophyper} and Proposition~\ref{proppara}. We prove that this intersection is not empty.

Suppose that $\gamma_n$ is hyperbolic. There are two cases. The first case is when the signs of $\gamma_n^-$ and $\gamma_n^+$ are opposed. Since $\lim\limits_{n\to+\infty} \gamma_n^- = \lim\limits_{n\to+\infty} \gamma_n^+ = \infty$, the graph of $C_{\gamma_n}(i)$ is below the one of $C_{\gamma_n}(q_n)$. So every geodesic starting from a point $z$ on $C_{\gamma_n}(i)$ and ending at $\infty$ has an intersection with $C_{\gamma_n}(q_n)$. This is true in particular when $z = \gamma_n^a i$.

The second case is when $\gamma_n^-$ and $\gamma_n^+$ have same sign. Observe then that we eventually have $d(i,(\gamma_n^-,\gamma_n^+)) > d(q_n,(\gamma_n^-,\gamma_n^+))$, because the converse would contradict the discretness of $\Gamma$ using Proposition~\ref{prophyper}. So, according to the same proposition, the graph of $C_{\gamma_n}(q_n)$ is below the one of $C_{\gamma_n}(i)$ if $n$ is big enough. Observe now that as $a$ is a nonpositive integer, the point $\gamma_n^a i$ belongs to the portion of $C_{\gamma_n}(i)$ between $i$ and $\gamma_n^-$, and that for any point $z$ in this portion of $C_{\gamma_n}(i)$, the intersection $C_{\gamma_n}(q_n) \cap [z, \infty)$ is not empty.

If $\gamma_n$ is parabolic, the proof is similar to the second case, when replacing $d(i,(\gamma_n^-,\gamma_n^+))$ by $B_{\gamma_n^+}(i,C_{\gamma_n}(z_0))$ and $d(q_n,(\gamma_n^-,\gamma_n^+))$ by $B_{\gamma_n^+}(q_n,C_{\gamma_n}(z_0))$ and using Proposition~\ref{proppara}. This concludes the proof. 
\end{proof}

\section{An example to prove theorem \ref{mainexple}}\label{secbnded}

Recall that the Dirichlet domain centered at $i$ of a fuchsian group $\Gamma$ with no elliptic element fixing $i$ is defined by :
\[ D_i(\Gamma) := \bigcap_{\substack{\gamma\in\Gamma \\ \gamma\neq id}} \hh_i(\gamma),
\]
where $\hh_i(\gamma) := \{ z \in \h ~|~ d(z,i) \leq d(z,\gamma(i)) \}$. 

The main interest for us of looking for this domain is the following classical fact (see section \rom{1}, Proposition $4.9$ of \cite{DalBo2010}) : \textit{if $u \in \Gamma\backslash(T^1\h)$ and if for some lift $\tilde{u}$ in $T^1\h$ the point $\tilde{u}^+$ belongs to $\overline{D_i(\Gamma)} \cap \partial\h$, then $u(\R^+)$ is almost minimizing.}

Let us now construct our example. Consider, for any rational number $q \in [4, +\infty)$ and any $n$ in $\N$, the hyperbolic isometry : 
\[ g_{q,n} := \begin{pmatrix} \sqrt{q} & (1-q)r_n \\[2pt]
                          -\frac{1}{r_n} & \sqrt{q} 
                        \end{pmatrix},
\]

where $(r_n)_\N$ is the sequence of real numbers defined by : 

\[ 
\left\{
\begin{array}{l}
  r_1 = 2 \\
  r_n = 3r_{n-1}, \forall n\geq 2. 
\end{array}
\right.
\]

Let $F_1 := \{ g_{q,n},\, q \in \Q \cap [4, +\infty),\, n \in \N \}$. 

We now conjugate the isometries $g_{4,n}$ by the isometries $T_{q,n} := \begin{pmatrix} 1 & t_{q,n} \\
                   0 & 1 \end{pmatrix} $,
for any rational number $q \in (1, 4)$ and any $n$ in $\N$, with $t_{q,n} := -r_n(\sqrt{q} - 2)$. We have : 
\begin{align*}
h_{q,n} &:= T_{q,n}^{-1} g_{4,n} T_{q,n} \\
        &= \begin{pmatrix} 4 - \sqrt{q} & r_n[(\sqrt{q}-2)^2 - 3] \\[2pt]
                              -\frac{1}{r_n} & \sqrt{q} \end{pmatrix}.
\end{align*}                              

Set $F_2 := \{ h_{q,n},\, q \in \Q \cap (1,4),\, n \in \N \}$. Define, for every $q \in \Q \cap (1,+\infty)$, the hyperbolic isometry $f_{q,n}$ by :
\begin{itemize}
\item $f_{q,n} := h_{q,n}$ if $q \in \Q \cap (1,4)$,
\item $f_{q,n} := g_{q,n}$ if $q \in \Q \cap [4,+\infty)$,
\end{itemize}
and set $F := F_1 \cup F_2 = \{ f_{q,n}, q \in \Q \cap (1,+\infty) \}$. 

For any non-elliptic isometry $\gamma$ in $\PSL_2(\R)$, define $\partial\hh_i(\gamma)$ to be the perpendicular bisector of the segment $[i,\gamma i]$. Also denote by $c(\gamma)$ the center of the euclidean half-circle $\partial\hh_i(\gamma)$ and by $e_l(\gamma)$, with $l=1,2$, the extremities in $\partial\h$ of $\partial\hh_i(\gamma)$. Denote also by $C_{(\eta,\xi)}(z)$ the hypercycle with extremities $\eta$ and $\xi$ in $\partial\h$ and passing through $z$ in $\h$.
 
The following key proposition gives us all the necessary information about the perpendicular bisectors $\partial\hh_i(\gamma)$ (see section~\ref{proofcv} for the proof).

\begin{prop}\label{propcv}
For every $q \in ( 1, +\infty )$ we have the following :
\begin{enumerate}
 \item \label{cv1} $\lim\limits_{n\to+\infty} c(f_{q,n}) = -\infty$ and $\lim\limits_{n\to+\infty} c(f_{q,n}^{-1}) = +\infty$.
 \item \label{cv2} $\lim\limits_{n\to+\infty} e_l(f_{q,n}) = -\infty$ and $\lim\limits_{n\to+\infty} e_l(f_{q,n}^{-1}) = +\infty$ for $l=1,2$.
 \item \label{cv3} Each perpendicular bisector $\partial\hh_i(f_{q,n}^{-1})$ is below the hypercycle $C_{(0,\infty)}(1+i)$.
\end{enumerate}
\end{prop}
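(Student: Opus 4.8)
The plan is to establish the three claims of Proposition~\ref{propcv} by direct computation of the bisectors $\partial\hh_i(\gamma)$ in terms of the matrix entries of the isometries $f_{q,n}$ and their inverses, and then to track the asymptotics as $n\to+\infty$ (equivalently, as $r_n\to+\infty$). First I would record the action of a general $\gamma = \begin{pmatrix} a & b \\ c & d\end{pmatrix} \in \PSL_2(\R)$ on the base point $i$, namely $\gamma i = \frac{ai+b}{ci+d}$, and compute $\gamma i$ explicitly for $\gamma = f_{q,n}$ and $\gamma = f_{q,n}^{-1}$ (using $ad-bc=1$, the inverse is $\begin{pmatrix} d & -b \\ -c & a\end{pmatrix}$). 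The perpendicular bisector $\partial\hh_i(\gamma)$ is a Euclidean half-circle orthogonal to $\partial\h = \R\cup\{\infty\}$ whose center $c(\gamma)$ on the real axis is the unique real point equidistant (hyperbolically, hence also via the explicit formula for the bisector) from $i$ and $\gamma i$; its extremities $e_1(\gamma), e_2(\gamma)$ are $c(\gamma) \pm \rho(\gamma)$ where $\rho(\gamma)$ is the Euclidean radius.

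Next I would carry out the asymptotic analysis. The key structural feature is that the off-diagonal entries scale with $r_n$: in $g_{q,n}$ the upper-right entry is $(1-q)r_n$ and the lower-left is $-1/r_n$, and similarly for $h_{q,n}$, so as $n\to+\infty$ the image $f_{q,n} i$ drifts off in a controlled direction. Computing $f_{q,n} i$ for the two families, one sees that the real part of the center $c(f_{q,n})$ behaves like a negative multiple of $r_n$ (driving it to $-\infty$), while for the inverses the sign flips and $c(f_{q,n}^{-1})\to +\infty$; this gives claim~\ref{cv1}. Since the radius $\rho(f_{q,n})$ grows no faster than $|c(f_{q,n})|$ in the relevant regime, both extremities $e_l(f_{q,n}) = c(f_{q,n}) \pm \rho(f_{q,n})$ are swept to $-\infty$ (resp.\ $+\infty$ for the inverses), yielding claim~\ref{cv2}. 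I would treat the two families $g_{q,n}$ (for $q\geq 4$) and $h_{q,n}$ (for $1<q<4$) in parallel, noting that $h_{q,n} = T_{q,n}^{-1} g_{4,n} T_{q,n}$ is a horizontal conjugate of $g_{4,n}$, so its bisector is the horizontal translate of that of $g_{4,n}$ by $-t_{q,n} = r_n(\sqrt q - 2)$, which stays bounded relative to the $r_n$-scale and hence does not affect the limits.

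For claim~\ref{cv3} I would compare each bisector $\partial\hh_i(f_{q,n}^{-1})$ against the fixed hypercycle $C_{(0,\infty)}(1+i)$, which is a Euclidean ray (or circular arc) through $1+i$ with endpoints $0$ and $\infty$ — concretely a Euclidean straight half-line through the origin at a fixed angle determined by $1+i$. ``Below'' here should mean that the half-circle $\partial\hh_i(f_{q,n}^{-1})$ lies in the region under this hypercycle. Since by claims~\ref{cv1}--\ref{cv2} the bisector $\partial\hh_i(f_{q,n}^{-1})$ has both endpoints going to $+\infty$, I expect that its highest point (Euclidean height $\rho(f_{q,n}^{-1})$) grows much more slowly than its Euclidean distance from $0$, so the whole half-circle sits beneath the linear hypercycle $C_{(0,\infty)}(1+i)$ once $n$ is large; I would make this quantitative by comparing $\rho(f_{q,n}^{-1})$ with $c(f_{q,n}^{-1})$ and checking the inequality defining the region below the hypercycle at the apex of the bisector.

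The main obstacle I anticipate is the bookkeeping in claim~\ref{cv3}: unlike claims~\ref{cv1}--\ref{cv2}, which only require the leading-order asymptotics of $c$ and $e_l$, the containment below $C_{(0,\infty)}(1+i)$ is a genuine geometric inequality that must hold for \emph{every} $n$ (or at least be verified uniformly), so I cannot rely on limits alone and must control the ratio of the radius to the center precisely, handling the two families $g_{q,n}$ and $h_{q,n}$ and the full range of $q$. I would organize this by reducing to the worst case over $q$ in each family and exploiting the explicit geometric growth $r_n = 2\cdot 3^{\,n-1}$ to get clean, verifiable bounds.
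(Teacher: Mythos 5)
Your overall strategy --- compute $f_{q,n}i$ and $f_{q,n}^{-1}i$ from the matrix entries, describe $\partial\hh_i(\gamma)$ as a Euclidean half-circle with center $c(\gamma)$ and radius $\rho(\gamma)$, and control the asymptotics of $c$ and of the ratio $\rho/|c|$ as $r_n\to+\infty$ --- is the paper's strategy. But your treatment of the whole family $q\in(1,4)$ rests on a false reduction. The bisector $\partial\hh_i(h_{q,n})$ of the conjugate $h_{q,n}=T_{q,n}^{-1}g_{4,n}T_{q,n}$ is \emph{not} the horizontal translate of $\partial\hh_i(g_{4,n})$ by $-t_{q,n}$: conjugation gives $\hh_i(T^{-1}\gamma T)=T^{-1}\hh_{Ti}(\gamma)$, i.e.\ the translate of the perpendicular bisector of the segment $[T_{q,n}i,\,g_{4,n}T_{q,n}i]$, which is based at $T_{q,n}i=t_{q,n}+i\neq i$ and is a different geodesic from $\partial\hh_i(g_{4,n})$. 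Moreover $-t_{q,n}=r_n(\sqrt q-2)$ is of order $r_n$, the very scale of the centers, so it cannot be brushed aside as ``not affecting the limits.'' This is not a cosmetic flaw: under your translation picture, $\partial\hh_i(h_{q,n}^{-1})$ would have center $\Asymp \sqrt q\,r_n$ and radius $\Asymp r_n$ (inherited from $g_{4,n}^{-1}$), hence radius-to-center ratio $1/\sqrt q$, which exceeds $1/\sqrt 2$ for $q\in(1,2)$ --- so claim~\ref{cv3} would actually \emph{fail} for those $q$. The paper instead computes $h_{q,n}i$ and $h_{q,n}^{-1}i$ directly from the matrix of $h_{q,n}$ and finds, for this family, genuinely different constants: $c(f_{q,n})\Asymp -r_n\sqrt q\,\tfrac{(\sqrt q-2)^2-3}{1-q}$ (coefficient $>2$), $c(f_{q,n}^{-1})\to+\infty$, and a radius-to-center ratio $\tfrac{1}{4-\sqrt q}\in(\tfrac13,\tfrac12)$ for the inverses. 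So the $q\in(1,4)$ case must be computed directly; there is no shortcut through the conjugation.

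There is a second, smaller gap in your verification of claim~\ref{cv3}. The hypercycle $C_{(0,\infty)}(1+i)$ is the Euclidean ray $\{y=x,\ x>0\}$, and a half-circle centered at $(c,0)$, $c>0$, of radius $\rho$ lies entirely in $\{y<x\}$ if and only if $c>\sqrt2\,\rho$: the extremal point of the circle against this line is not the apex $(c,\rho)$ but the point $(c-\tfrac{\rho}{\sqrt2},\tfrac{\rho}{\sqrt2})$, where $y-x=\sqrt2\,\rho-c$. Checking the inequality ``at the apex'' only gives $\rho<c$, which does not prevent the circle from crossing the ray. The paper's proof of \ref{cv3} is exactly the stronger ratio bound: for the inverses, $\rho\Asymp\lambda\,c$ with $\lambda=\tfrac1{\sqrt q}\le\tfrac12$ (case $q\ge4$, by symmetry with $\partial\hh_i(f_{q,n})$) or $\lambda=\tfrac1{4-\sqrt q}<\tfrac12$ (case $1<q<4$), and $\tfrac12<\tfrac1{\sqrt2}$. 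The same precision issue affects your claim~\ref{cv2}: ``$\rho$ grows no faster than $|c|$'' does not send both extremities $c\pm\rho$ to infinity (if $\rho/|c|\to1$, the extremity $c+\rho$ can stay bounded); what is needed, and what the paper proves, is $\rho\Asymp\lambda|c|$ with a constant $\lambda<1$ for each fixed $q$.
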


Using Proposition~\ref{propcv}, our goal is to extract a sequence of elements $\gamma_m$ of $F$ such that all the perpendicular bisectors $\partial\hh_i(\gamma_m)$ are disjoint and which contains an infinite number of elements $f_{q,n}$ for every rational number $q \in (1,+\infty)$.

Consider any bijection $\psi : \N \mapsto \Q \cap (1,+\infty) \times \N$ and set $\psi(m) = (q_m,\psi_2(m))$. Observe that for every $q$ in $(1,+\infty)$, there is an infinite number of elements $q_m$ such that $q_m = q$. We can now write $F = \{ f_{q_m,\psi_2(m)}, m \in \N \}$. 

\textbf{Choice of the elements $\gamma_m$.} Put $\gamma_0 := f_{q_0,0}$ and set $2C$ to be the distance between the geodesic $(0,\infty)$ and the hypercycle $C_{(0,\infty)}(1+i)$. Then, for every $m \geq 1$, we define $\gamma_m$ by induction. We ask that $\gamma_m = f_{q_m,n_m}$ where $n_m$ is the smallest integer among the integers $p$ satisfying : 

\begin{enumerate}
\item $|e^\epsilon_k(f_{q_m,p})| > |e^\epsilon_l(\gamma_{m-1})|$, for $k=1,2$, $l=1,2$ and $\epsilon = \pm 1$.
\item $d(\partial\hh_i(f_{q_m,p}),\partial\hh_i(f_{q_m,p}^{-1})) \geq 2C$
\item $d(\partial\hh_i(f_{q_m,p}^{\pm 1}),\partial\hh_i(f_{q_s,n_s}^{\pm 1})) \geq C$ for all $s < m$.
\end{enumerate}

Such a sequence $(\gamma_m)_\N$ exists according to Proposition~\ref{propcv}. We now set $\Gamma := < \gamma_m,~m \in \N >$ and prove that $\Gamma$ answers Theorem~\ref{mainexple}.

By a classic ping-pong argument (see~\cite{DalBo2010}), the group $\Gamma$ is discrete and free. Moreover, its Dirichlet domain centered at $i$ is : 

\[ D_i(\Gamma) = \bigcap\limits_{m\in\mathbb{N}}\hh_i(\gamma_m) \cap \bigcap\limits_{m\in\mathbb{N}}\hh_i(\gamma_m^{-1}).  \]

Fix $\tilde{u}$ in $T^1\h$ such that $\tilde{u}(0) = i$ and $\tilde{u}^+ = \infty$ and consider its projection $u$ to $T^1S := \Gamma\backslash(T^1\h)$. Since $\infty$ is in $\overline{D_i(\Gamma)} \cap \partial\h$, the half-geodesic $u(\R^+)$ is almost minimizing. Hence (i) of the theorem.

To prove condition (ii), we use Proposition~\ref{equivcle}. Observe that it follows directly from the definition of $f_{q,n}$ that for every rational number $q$ in $(1,+\infty)$, we have 

\begin{equation}
f_{q,n}^{-1} \infty \underset{n\to+\infty}{\longrightarrow} \infty.
\end{equation}

Moreover, since $\im(f_{q,n}(i)) \underset{n\to+\infty}{\longrightarrow} \frac{1}{q}$, we have :

\begin{equation} B_\infty(f_{q,n}i,i) \underset{n\to+\infty}{\longrightarrow} B_\infty(\frac{i}{q},i) = |\ln(q)|.
\end{equation} 

Now, for every $q \in \Q \cap (1,+\infty)$ there is an infinite number of elements $f_{q,n}$ in $(\gamma_m)_\N$, thus in $\Gamma$. So, according to ($4$), ($5$) and Proposition~\ref{equivcle}, it follows that all the elements $g_{|\ln(q)|}u$ belong to $\overline{h_\R u}$. Hence, $g_{\R^+}u$ is included in $\overline{h_\R u}$ and we get (ii) of the theorem.

Finally, fix $z$ in the interior of $D_i(\Gamma)$ and any $\gamma = \gamma_{m_1}^{i_1}\dots\gamma_{m_k}^{i_k}$ in $\Gamma$ different from the identity, written as a reduced word in the letters $\gamma_m$. 

If $k=1$, then $\gamma = \gamma_{m_1}^{i_1}$ and 
\begin{align*}
  d(z,\gamma z) &= d(z,\gamma^{-1} z) \\
                &\geq \max(d(z,\partial\hh_i(\gamma_{m_1})), d(z,\partial\hh_i(\gamma_{m_1}^{-1}))) 
\end{align*}

Since $d(\partial\hh_i(\gamma_{m_1}), \partial\hh_i(\gamma_{m_1}^{-1})) \geq 2C$, we obtain that $d(z,\gamma z) \geq C$.

If $k > 1$, 

\begin{align*}
  d(z,\gamma z) &= d(z,\gamma_{m_1}^{i_1}\dots\gamma_{m_k}^{i_k} z) 
  = d(\gamma_{m_1}^{-i_1} z,\gamma_{m_2}^{i_2}\dots\gamma_{m_k}^{i_k} z) \\
  &\geq d(\partial\hh_i(\gamma_{m_1}^{\pm 1}),\partial\hh_i(\gamma_{m_2}^{\pm 1})) \\
  &\geq C.
\end{align*}

It follows that the injectivity radius on $S$ is everywhere greater than $C$. Finally, since all the axes of the elements $\gamma_m$ in $\Gamma$ intersect the half-geodesic $[i,\infty)$, and since their translation length $l_{\gamma_n}$ is constant by definition of the elements $f_{q,n}$, the injectivity radius $\IInj(u(\R^+))$ is also finite. So $C \leq \IInj(u(\R^+)) < +\infty$. Hence condition (iii) of Theorem~\ref{mainexple}. This completes the proof. \qed

\section{Proof of Proposition \ref{propcv}}\label{proofcv}

Using the classic formula :

\[ \forall a,b \in \h,\, \sinh\frac{d(a,b)}{2} = \frac{|a-b|}{2\sqrt{\im(a)\im(b)}},
\]
we get :

\begin{prop}\label{eqperp}
Consider a point $P= R + iI$ in $\h$. The equation of the perpendicular bisector of the hyperbolic segment between $i$ and $P$ is : 

\begin{equation*}
\left( x + \frac{R}{I-1} \right)^2 + y^2 = I\left( 1 + \frac{R^2}{(I-1)^2}\right)
\end{equation*} 
\end{prop}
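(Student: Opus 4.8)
The plan is to identify the perpendicular bisector with the locus of points hyperbolically equidistant from $i$ and $P$, and then turn the equidistance condition into an algebraic equation using the half-angle distance formula stated just before the proposition. Writing $z = x + iy$ for a generic point of $\h$, the bisector of $[i,P]$ is $\{ z \,|\, d(z,i) = d(z,P) \}$. Since $\sinh$ is increasing, this is equivalent to $\sinh\frac{d(z,i)}{2} = \sinh\frac{d(z,P)}{2}$, and the formula gives
\[
\frac{|z-i|}{2\sqrt{y}} = \frac{|z-P|}{2\sqrt{yI}},
\]
using $\im(i)=1$, $\im(P)=I$ and $\im(z)=y$. The factor $\frac{1}{2\sqrt{y}}$ is common to both sides, so it cancels and the equidistance condition becomes simply $\sqrt{I}\,|z-i| = |z-P|$, that is
\[
I\,|z-i|^2 = |z-P|^2.
\]

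Next I would expand both moduli in coordinates. With $i = 0+i$ and $P = R+iI$ one has $|z-i|^2 = x^2 + (y-1)^2$ and $|z-P|^2 = (x-R)^2 + (y-I)^2$, so the equation reads
\[
I\left[x^2 + (y-1)^2\right] = (x-R)^2 + (y-I)^2.
\]
The key observation is that the terms linear in $y$ on the two sides are both equal to $-2Iy$ and therefore cancel; this is precisely what forces the bisector to be a Euclidean semicircle orthogonal to $\partial\h$ rather than a general conic. After collecting terms, the equation takes the form
\[
(I-1)(x^2+y^2) + 2Rx + (I - R^2 - I^2) = 0.
\]

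Finally, assuming $I \neq 1$ (the degenerate case $I = 1$ yields the vertical geodesic $x = R/2$, consistent with the $\frac{1}{I-1}$ appearing in the statement), I would divide by $I-1$ and complete the square in $x$ to obtain
\[
\left(x + \frac{R}{I-1}\right)^2 + y^2 = \frac{R^2}{(I-1)^2} - \frac{I - R^2 - I^2}{I-1}.
\]
The only step requiring a little care is simplifying the right-hand side: putting it over the common denominator $(I-1)^2$, the numerator factors as $I\left[R^2 + (I-1)^2\right]$, which gives exactly $I\left(1 + \frac{R^2}{(I-1)^2}\right)$ and finishes the proof. I do not expect any genuine obstacle, since the argument is an elementary computation; the two points deserving attention are the cancellation of the $y$-linear terms (which guarantees a circle) and the final factorization of the constant term into the stated closed form.
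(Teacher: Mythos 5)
Your proof is correct and follows essentially the same route as the paper, which simply invokes the classic formula $\sinh\frac{d(a,b)}{2} = \frac{|a-b|}{2\sqrt{\im(a)\im(b)}}$ and leaves the computation implicit; you have merely supplied the details (equidistance characterization, cancellation of the $y$-linear terms, completion of the square, and the factorization of the constant term as $I\left[R^2+(I-1)^2\right]$), all of which check out. Your side remark on the degenerate case $I=1$ giving the vertical line $x = R/2$ is also accurate.
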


For the following calculations, we distinguish the case $q \in [4,+\infty)$ from the case $q \in (1,4)$.

\textbf{Case 1: fix $q \in [4, +\infty)$.} We have $f_{q,n} i = R_{q,n} + iI_{q,n}$ where 
\[ R_{q,n} := \frac{\sqrt{q}(1-q)r_n - \frac{\sqrt{q}}{r_n}}{q + \frac{1}{r_n^2}}
\]
and 
\[ I_{q,n} := \frac{1}{q + \frac{1}{r_n^2}}.
\] 

Observe that we have $r_n \underset{n\to+\infty}{\longrightarrow} +\infty$. Thus, as $n$ converges to $+\infty$, the quantity $R_{q,n}$ is equivalent to $\frac{\sqrt{q}(1-q)r_n}{q} = r_n\frac{(1-q)}{\sqrt{q}}$, where the number $\frac{1-q}{\sqrt{q}}$ is different from $0$, and the quantity $I_{q,n}$ is equivalent to $\frac{1}{q}$. So, applying Proposition~\ref{eqperp}, we get the following asymptotic equivalence : 

\begin{equation*}
c(f_{q,n}) = -\frac{R_{q,n}}{I_{q,n} - 1} \Asymp -r_n\sqrt{q}. 
\end{equation*}

So the centers $c(f_{q,n})$ converge to $-\infty$.

Let us now study the radii of the geodesics $\partial\hh_i(f_{q,n})$. We have 

\begin{equation*}
\sqrt{I_{q,n}}\left( 1 + \frac{R_{q,n}^2}{(I_{q,n}-1)^2} \right)^{\frac{1}{2}} \Asymp \frac{1}{\sqrt{q}}\frac{R_{q,n}}{I_{q,n} - 1} = -\frac{1}{\sqrt{q}}c(f_{q,n})
\end{equation*}

where $\frac{1}{\sqrt{q}}$ belongs to $(0,\frac{1}{2} ]$. 

Thus, according to Proposition~\ref{eqperp}, the extremities $e_l(f_{q,n})$, for $l=1,2$, of the geodesics $\partial\hh_i(f_{q,n})$ converge to $-\infty$, and since $\frac{1}{\sqrt{q}} \leq \frac{1}{2}$, all these geodesics are below the hypercycle $C_{(0,\infty)}(-1+i)$. 

We now study the case of $\partial\hh_i(f_{q,n}^{-1})$. We have :

\[
 f_{q,n}^{-1}i = \frac{\sqrt{q}(q-1)r_n + \frac{\sqrt{q}}{r_n}}{q + \frac{1}{r_n^2}} + i\frac{1}{q + \frac{1}{r_n^2}}. 
\]

So we observe that the real part of $f_{q,n}^{-1}i$ is the negative of the real part of $f_{q,n}i$. So the geodesics $\partial\hh_i(f_{q,n}^{-1})$ and $\partial\hh_i(f_{q,n})$ are symetric with respect to the imaginary axis. In particular, they are below the hypercycle $C_{(0,\infty)}(1+i)$.

\textbf{Case 2: fix $q \in \Q \cap (1,4)$.} We have $f_{q,n}i = R_{q,n} + iI_{q,n}$ where
\[ R_{q,n} := \frac{r_n\sqrt{q}((\sqrt{q} - 2)^2 - 3) - \frac{1}{r_n}(4-\sqrt{q})}{q + \frac{1}{r_n^2}}
\]
and 
\[ I_{q,n} = \frac{1}{q + \frac{1}{r_n^2}}. \]

Observe that since $q \in (1,4)$, the number $\sqrt{q}((\sqrt{q} - 2)^2 - 3)$ is different from $0$. So as $n$ goes to $+\infty$, we have the equivalences :
 \begin{align*}
  R_{q,n} &\Asymp r_n\frac{(\sqrt{q}-2)^2-3}{\sqrt{q}}
\end{align*} 

and 

\[ I_{q,n} \Asymp \frac{1}{q}.
\]

Thus, according to Proposition~\ref{eqperp}

\[
c(f_{q,n}) = -\frac{R_{q,n}}{I_{q,n} - 1} \Asymp -r_n\sqrt{q} \frac{(\sqrt{q}-2)^2-3}{1-q} 
\]

where $\sqrt{q} \frac{(\sqrt{q}-2)^2-3}{1-q}$ is a real number greater than $2$. Thus, these centers converge to $-\infty$.

Let us now study the radii of the geodesics $\partial\hh_i(f_{q,n})$. We have

\[
\sqrt{I_{n,q}}\left( 1 + \frac{R_{n,q}^2}{(I_{n,q}-1)^2} \right)^{\frac{1}{2}} \Asymp \frac{1}{\sqrt{q}}\frac{R_{n,q}}{I_{n,q} - 1} = -\frac{1}{\sqrt{q}}c(f_{q,n})
\]

where $\frac{1}{\sqrt{q}}$ belongs to $(\frac{1}{2},1)$. 
Thus, again, the extremities  $e_l(f_{q,n})$, for $l=1,2$, of the geodesics $\partial\hh_i(f_{q,n})$ converge to $-\infty$ as $n$ goes to $+\infty$.

We now study the case of $f_{q,n}^{-1}$ for $q \in (1,4)$. We have $f_{q,n}^{-1}i = R_{q,n} + iI_{q,n}$ where 

\[ R_{q,n} = \frac{-r_n(4-\sqrt{q})[(\sqrt{q} - 2)^2 - 3] + \frac{\sqrt{q}}{r_n}}{(4-\sqrt{q})^2 + \frac{1}{r_n^2}}
\] 

and 

\[ I_{q,n} = \frac{1}{(4-\sqrt{q})^2 + \frac{1}{r_n^2}}.
\]

Since $q \in (1, 4)$, the number $(4-\sqrt{q})[(\sqrt{q} - 2)^2 - 3]$ is different from $0$. Thus, 

\[
R_{q,n} \Asymp r_n \frac{(\sqrt{q} - 2)^2 -3}{\sqrt{q} - 4}.
\]

Since
\[ I_{q,n} \Asymp \frac{1}{(4-\sqrt{q})^2},
\]
we have : 

\[
c(f_{q,n}^{-1}) = -\frac{R_{q,n}}{I_{q,n} - 1} \Asymp  -r_n\frac{((\sqrt{q} - 2)^2 -3)(\sqrt{q}-4)}{1-(4-\sqrt{q})^2} 
\]

Since the number $\frac{((\sqrt{q} - 2)^2 -3)(\sqrt{q}-4)}{1-(4-\sqrt{q})^2}$ is negative for $q \in (1,4)$, the centers $c(f_{q,n}^{-1})$ of the geodesics $\partial\hh_i(f_{q,n}^{-1})$ converge to $+\infty$ as $n$ goes to $+\infty$. 

We now study the radii : 

\[
\sqrt{I_{n,q}}\left( 1 + \frac{R_{n,q}^2}{(I_{n,q}-1)^2} \right)^{\frac{1}{2}} \Asymp \frac{1}{4 - \sqrt{q}}\frac{R_{q,n}}{I_{q,n} - 1} = -\frac{1}{4 - \sqrt{q}}c(f_{q,n}^{-1})
\]

where the number $\frac{1}{4 - \sqrt{q}}$ belongs to $(\frac{1}{3},\frac{1}{2})$. 

Thus, the extremities $e_l(f_{q,n}^{-1})$, for $l=1,2$, converge to $+\infty$. Moreover, since $\frac{1}{4 - \sqrt{q}} < \frac{1}{2}$, the geodesics $\partial\hh_i(f_{q,n}^{-1})$ are below the hypercycle $C_{(0,\infty)}(1+i)$ as claimed.\qed

\bibliographystyle{alpha}
\bibliography{horocycles_infinite_surfaces}

\vspace{1cm}

Institut de Recherche MAthématique de Rennes

Université de Rennes 1

263, Avenue du Général Leclerc

35042, Rennes

France

\textit{E-mail adress :} \href{mailto:alexandre.bellis@univ-rennes1.fr}{alexandre.bellis@univ-rennes1.fr}

\textit{Website :} \url{https://perso.univ-rennes1.fr/alexandre.bellis/}

\end{document}